\newtheorem{theorem}{Theorem}
\newtheorem{lemma}[theorem]{Lemma}
\newcommand{\Oh}{\mathrm{O}}
\newcommand{\oh}{\mathrm{o}}
\newcommand{\im}{\mathrm{i}}
\newcommand{\e}{\mathrm{e}}
\newcommand{\dd}{\mathrm{d}}
\title{Asymptotics for a Variant of the Mittag-Leffler Function}
\author{Stefan Gerhold}
\address{Vienna University of Technology, Institute of Mathematical Methods in Economics,
Wiedner Hauptstr.\ 8/105-1,
A-1040 Vienna, Austria}
\email{sgerhold at fam.tuwien.ac.at}
\date{\today}
\begin{document}

\begin{abstract}
  We generalize the Mittag-Leffler function by attaching an exponent to its
  Taylor coefficients. The main result is
  an asymptotic formula valid in sectors of the complex plane,
  which extends work by Le Roy [Bull.\ des sciences math.~24, 1900]
  and Evgrafov [Asimptoticheskie otsenki i tselye funktsii, 1979].
  It is established by Plana's summation formula in conjunction with
  the saddle point method.
  As an application, we (re-)prove a non-holonomicity result about powers
  of the factorial sequence.
\end{abstract}

\keywords{Entire function, Mittag-Leffler function, Plana's summation formula,
non-holonomicity}

\subjclass[2010]{33E12, 41A60}

\maketitle

\section{Introduction}

For $a,b,\alpha>0$, the series
\begin{equation}\label{eq:fu}
  F_{a,b}^{(\alpha)}(z) = \sum_{n=0}^\infty \frac{z^n}{\Gamma(a n+ b)^{\alpha}}
\end{equation}
defines an entire function of~$z$. The Bessel function
$\mathrm{I}_0(2\sqrt{z})=\sum_{n=0}^\infty z^n/n!^2$ and the
(generalized) Mittag-Leffler function $E_{a,b}(z)=\sum_{n=0}^\infty z^n/\Gamma(a n+b)$
are special cases.
The function~\eqref{eq:fu} has the order $1/a\alpha$; see, e.g.,
Titchmarsh~\cite[Example~8.4]{Ti39}, for $a=b=1$. His argument trivially
extends to $a,b>0$.

Le Roy~\cite{leR00c} has obtained the asymptotics of~$F_{1,1}^{(\alpha)}(z)$
as~$z$ tends to infinity along the real line. This result
(a special case of formula~\eqref{eq:asympt} below) appears also in Hardy~\cite[p.~55]{Ha10}.
Le Roy establishes it by converting the sum into an integral and then
applying the Laplace method.
Note that the latter can also be applied in a direct way:
For real~$z$, the summands of
\begin{equation}\label{eq:fu1}
  F_{1,1}^{(\alpha)}(z) = \sum_{n=0}^\infty \frac{z^n}{n!^\alpha}
\end{equation}
are positive and concentrated
near $n\approx z^{1/\alpha}$, and the Laplace method
is easily carried out. The contribution of the present note
is an extension of the asymptotics to $a,b>0$ and complex values of~$z$,
which is presented in Section~\ref{se:main}.
(For $a=b=1$ and complex~$z$, this question is also discussed in Evgrafov's book~\cite{Ev79};
see the end of Section~\ref{se:main} for detailed comments.)

As a small application, we prove in Section~\ref{se:appl}  
that the (possibly formal) series~\eqref{eq:fu1} is not
$D$-finite~\cite{St80} for all $\alpha\in\mathbb{R}\setminus \mathbb{Q}$.
(This is a special case of a known result~\cite{BeGeKlLu08}.)

We note in passing that the function~\eqref{eq:fu1} satisfies the integral relation
\begin{equation}\label{eq:int rel}
  \int_0^\infty \e^{-t/z} F_{1,1}^{(\alpha+1)}(t) \dd t = z F_{1,1}^{(\alpha)}(z),
    \qquad z\neq 0.
\end{equation}
Indeed:
\begin{align*}
  \int_0^\infty \e^{-t/z} \sum_{n=0}^\infty \frac{t^n}{n!^{\alpha+1}} \dd t
    &= \sum_{n=0}^\infty \frac{1}{n!^{\alpha+1}}
    \int_0^\infty t^n \e^{-t/z} \dd t \\
  &= \sum_{n=0}^\infty \frac{z}{n!^{\alpha+1}}
     \int_0^\infty (zs)^n \e^{-s} \dd s
    = \sum_{n=0}^\infty \frac{z^{n+1}}{n!^\alpha}.
\end{align*}

\section{Main Result}\label{se:main}

\begin{theorem}\label{thm:main}
  Let $\alpha,a,b>0$ and $\varepsilon>0$ be arbitrary. Then, for $z\to\infty$
  in the sector
  \[
    |\arg(z)| \leq
      \begin{cases}
        \tfrac12 a\alpha \pi - \varepsilon & 0<a\alpha < 2\\
        (2-\tfrac12 a\alpha)\pi - \varepsilon & 2\leq a\alpha<4 \\
        0 & 4\leq a\alpha,
      \end{cases}
  \]
  we have the asymptotics
  \begin{equation}\label{eq:asympt}
    F_{a,b}^{(\alpha)}(z)  \sim \frac{1}{a\sqrt{\alpha}}
      (2\pi)^{\frac{1-\alpha}{2}}
      z^{\frac{\alpha-2b\alpha +1}{2 a\alpha }} \e^{\alpha z^{1/a\alpha}}.
  \end{equation}
\end{theorem}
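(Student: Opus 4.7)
The plan is to represent the defining series of $F_{a,b}^{(\alpha)}$ by Plana's summation formula,
\[
F_{a,b}^{(\alpha)}(z) = \tfrac{1}{2\Gamma(b)^\alpha} + \int_0^\infty \frac{z^t}{\Gamma(at+b)^\alpha}\,\dd t + \im \int_0^\infty \frac{f(\im t) - f(-\im t)}{\e^{2\pi t}-1}\,\dd t,
\]
where $f(t)=z^t/\Gamma(at+b)^\alpha$, and then to extract the leading contribution from the first integral by the saddle point method, while showing that the boundary term and the correction integral are negligible.

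For the main integral I would write the integrand as $\exp(\phi(t))$ with $\phi(t) = t\log z - \alpha\log\Gamma(at+b)$ and invoke Stirling's expansion $\log\Gamma(at+b) = (at+b-\tfrac12)\log(at+b) - at - b + \tfrac12\log(2\pi) + \Oh(1/t)$. Differentiating gives a saddle at $t_\ast \sim z^{1/(a\alpha)}/a$ with $\phi''(t_\ast) \sim -\alpha a^2 z^{-1/(a\alpha)}$. Inserting these quantities into the standard saddle point formula
\[
\int_C \e^{\phi(t)}\,\dd t \sim \sqrt{\frac{2\pi}{-\phi''(t_\ast)}}\, \e^{\phi(t_\ast)}
\]
and simplifying $\e^{\phi(t_\ast)}$ by a second appeal to Stirling should reproduce the right-hand side of~\eqref{eq:asympt}, with the prefactor $(2\pi)^{(1-\alpha)/2}/(a\sqrt{\alpha})$ arising from the interplay between the Gaussian width and the Stirling constant raised to the $\alpha$-th power.

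For complex $z$ the contour $[0,\infty)$ must be deformed to pass through $t_\ast$ along a direction of steepest descent. The condition $|\arg z| < \tfrac12 a\alpha\pi$ of the first case is exactly what is needed for $t_\ast = z^{1/(a\alpha)}/a$ to lie in the right half-plane, so that a mild deformation suffices. The enlarged range $|\arg z| < (2-\tfrac12 a\alpha)\pi$ for $2\le a\alpha<4$ should come from a contour that loops further around the origin to reach a saddle that has crossed into the left half-plane, while for $a\alpha\ge 4$ no such deformation is available and only real positive $z$ is tractable. The Plana correction integral is suppressed by the factor $\e^{-2\pi t}$, so after an estimate of $|f(\pm\im t)|$ using the reflection formula and Stirling it will be of order $\Oh(1)$, hence negligible against the double-exponentially large main term.

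The principal obstacles will be (i) carrying out the contour deformation rigorously in the wider sectors of the intermediate case, where the steepest descent path is no longer close to the real axis; (ii) giving a uniform version of the Stirling expansion used in the saddle point expansion, valid for $at+b$ in a complex region around $z^{1/(a\alpha)}$; and (iii) bounding the Plana correction integral uniformly as $\arg z$ approaches the boundary of the sector, where the oscillatory factor $z^{\pm\im t}$ ceases to provide decay.
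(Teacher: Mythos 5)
Your overall skeleton --- Plana's formula to trade the sum for $\int_0^\infty z^t\Gamma(at+b)^{-\alpha}\,\dd t$, then a saddle point at $t_*\sim a^{-1}z^{1/a\alpha}$ with $\phi''(t_*)\sim-\alpha a^2z^{-1/a\alpha}$ --- is exactly the paper's route, and your computation of the prefactor checks out. But your explanation of the case distinction in the sector is wrong, and this is not a cosmetic point: it is the heart of the theorem's hypotheses. You describe the range $|\arg z|\le(2-\tfrac12 a\alpha)\pi-\varepsilon$ for $2\le a\alpha<4$ as an \emph{enlarged} range reached by ``a contour that loops further around the origin to reach a saddle that has crossed into the left half-plane.'' In fact $(2-\tfrac12 a\alpha)\pi\le\tfrac12 a\alpha\pi$ precisely when $a\alpha\ge2$, so the sector \emph{shrinks} in the second case and collapses to the positive reals in the third; no looping contour is involved, and the saddle $t_*$ with $\arg t_*=\arg(z)/(a\alpha)$ stays comfortably in the right half-plane throughout the second regime. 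The mechanism you propose would not produce the stated sectors and does not correspond to any step that needs to be carried out.

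The actual source of the constraint is the step you dismiss as automatically harmless: the Plana correction integral. Since $|\Gamma(x+\im y)^{-1}|$ \emph{grows} like $\e^{\pi|y|/2}$ in the imaginary direction, one gets $|f(\pm\im y)|\asymp\e^{(|\arg z|+a\alpha\pi/2+\delta)y}$, so the hypotheses of Plana's theorem ($|f(x\pm\im y)|\e^{-2\pi y}\to0$ and integrability of $|f|\e^{-2\pi y}$) hold only when $|\arg z|+\tfrac12 a\alpha\pi<2\pi$, i.e.\ $|\arg z|<(2-\tfrac12 a\alpha)\pi$. Intersecting this with the saddle-point requirement $|\arg t_*|=|\arg z|/(a\alpha)\le\pi/2-\varepsilon'$, i.e.\ $|\arg z|\le\tfrac12 a\alpha\pi-\varepsilon$, yields exactly the three cases of the theorem: for $a\alpha<2$ the saddle condition binds, for $2\le a\alpha<4$ the Plana condition binds, and for $a\alpha\ge4$ only real $z$ survives. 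Your claim that the correction integral ``will be of order $\Oh(1)$'' after using $\e^{-2\pi t}$ is therefore unjustified without the sector restriction --- outside it the representation fails altogether. You would need to add this growth estimate (the paper isolates it as a separate lemma, obtaining an $\Oh(z)$ error term) before the rest of your argument can proceed; the remaining steps (contour shift to a ray through $t_*$, local Gaussian expansion, tail bounds) then go through essentially as you sketch them.
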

Applying the Laplace method directly does not work for non-real~$z$;
the absolute values of the summands in~\eqref{eq:fu}
are peaked near $n\approx a^{-1}|z|^{1/a\alpha}$, but
it seems that one cannot balance the local expansion and the tails.
This is caused by oscillations in the summands, which can be dealt with
by shifting the problem to the asymptotic evaluation of integral.
The Laplace method then succeeds, after moving the integration contour
through a saddle point located approximately at~$a^{-1}z^{1/a\alpha}$.
\begin{lemma}
  Let $\alpha,a,b>0$ and $\varepsilon>0$ be arbitrary. Then, as $z\to\infty$ in the sector
  $|\arg(z)|\leq \max\{0,(2-\tfrac12 a\alpha)\pi\}$, we have
  \begin{equation}\label{eq:sum int}
    F_{a,b}^{(\alpha)}(z)
      = \int_0^\infty \frac{z^t}{\Gamma(at+b)^\alpha} \dd t + \Oh(z).
  \end{equation}
\end{lemma}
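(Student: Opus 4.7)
The plan is to apply Plana's summation formula
\[
  \sum_{n=0}^\infty f(n) = \tfrac{1}{2} f(0) + \int_0^\infty f(t)\,\dd t + \im\int_0^\infty\frac{f(\im t)-f(-\im t)}{\e^{2\pi t}-1}\,\dd t
\]
to the function $f(w) = z^w/\Gamma(aw+b)^\alpha$. This $f$ is analytic on the closed right half-plane $\{\operatorname{Re} w \geq 0\}$ once $\Gamma(aw+b)^\alpha$ is defined as $\exp(\alpha\log\Gamma(aw+b))$ via the principal branch, which is well posed because $\Gamma$ has neither zeros nor poles there. Since $\tfrac{1}{2}f(0) = \tfrac{1}{2}\Gamma(b)^{-\alpha} = \Oh(1)$, the lemma reduces to bounding the remainder integral by $\Oh(z)$.

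For that estimate I would combine $|z^{\pm\im t}| = \e^{\mp t\arg z}$ with the complex Stirling asymptotic
\[
  |\Gamma(a\im t+b)|^{-\alpha}\sim(2\pi)^{-\alpha/2}(at)^{-\alpha(b-1/2)}\e^{\frac{1}{2}\pi a\alpha t},\qquad t\to\infty.
\]
The numerator of the integrand therefore grows like $t^{-\alpha(b-1/2)}\e^{(\frac12\pi a\alpha+|\arg z|)t}$, while the denominator $\e^{2\pi t}-1$ grows like $\e^{2\pi t}$. Hence, in the open sector $|\arg z|<(2-\tfrac12 a\alpha)\pi$ the integrand decays exponentially in $t$ and the whole integral is in fact $\Oh(1)$, comfortably implying the claimed $\Oh(z)$. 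The same Stirling estimate supplies the growth hypothesis $f(x+\im y)=\oh(\e^{2\pi|y|})$ uniformly for $x\geq 0$ required by Plana's formula.

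The main obstacles lie at the edge of the sector. First, on the boundary ray $|\arg z|=(2-\tfrac12 a\alpha)\pi$ the exponential decay degenerates to the mere polynomial factor $t^{-\alpha(b-1/2)}$, so one must estimate the resulting integral more carefully; but any residual logarithmic or polynomial dependence on $z$ is absorbed by the lax $\Oh(z)$ bound. Second, and more serious, when $a\alpha\geq 4$ the sector collapses to $\arg z=0$ and the Plana remainder diverges outright. For that case I would abandon Plana in favour of an Euler--Maclaurin expansion of $g(t)=z^t/\Gamma(at+b)^\alpha$ on $[0,\infty)$: the derivatives $g^{(k)}(0)$ depend on $z$ only polynomially in $\log z$ (the dominant factor being $(\log z)^k$), so each boundary correction is $\Oh(\mathrm{poly}\log z)$, and choosing the truncation order $K$ large enough makes the Euler--Maclaurin remainder negligible too, delivering the required $\Oh(z)$ bound along the positive real axis.
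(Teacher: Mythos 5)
Your core argument---Plana's formula applied to $f(w)=z^w/\Gamma(aw+b)^\alpha$, the boundary term $\tfrac12 f(0)=\Oh(1)$, and the bound on the remainder integral via $|z^{\pm\im t}|=\e^{\mp t\arg z}$ together with the Stirling estimate $|\Gamma(b+\im at)|^{-\alpha}\sim(2\pi)^{-\alpha/2}(at)^{-\alpha(b-1/2)}\e^{\pi a\alpha t/2}$---is exactly the paper's proof, and it is correct on the open sector $|\arg z|<(2-\tfrac12 a\alpha)\pi$. (Two small quibbles: near $t=0$ the integrand is of size $|f'(0)|/\pi=\Oh(\log z)$ rather than $\Oh(1)$, which is still comfortably $\Oh(z)$; and on the closed boundary ray the large-$t$ tail behaves like $t^{-\alpha(b-1/2)}$, which is not even integrable when $b\leq \tfrac12+\tfrac1\alpha$---a defect the paper's own proof shares, and which is harmless because Theorem~\ref{thm:main} only ever uses the interior of the sector.)

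Where you genuinely depart from the paper is the regime $a\alpha\geq 4$, and your instinct there is right: since $1/\Gamma$ really does grow like $\e^{\pi|y|/2}$ along vertical lines, $|f(\im y)|\e^{-2\pi y}$ grows like $\e^{(a\alpha/2-2)\pi y}$ and the hypotheses of Plana's theorem fail (the remainder integral diverges for $a\alpha>4$). The paper simply asserts that its estimate \eqref{eq:f est} ``implies both required conditions,'' which it does not once $|\arg z|+a\alpha\pi/2\geq 2\pi$; your Euler--Maclaurin substitute is a sensible repair, and the observation that $g^{(k)}(0)=\Oh((\log z)^k)$ correctly controls the boundary corrections. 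The one overstatement is the claim that a large fixed truncation order $K$ yields the $\Oh(z)$ error: the Euler--Maclaurin remainder is controlled by $\int_0^\infty|g^{(2K)}|$, and near the saddle $t_0\approx a^{-1}z^{1/a\alpha}$ one has $g^{(2K)}\asymp t_0^{-K}g$, so the remainder is of order $t_0^{-K}$ \emph{times the main integral}, i.e.\ still exponentially large in $z^{1/a\alpha}$ and not $\Oh(z)$ for any fixed $K$. What your route actually proves for $a\alpha\geq4$ is $F_{a,b}^{(\alpha)}(z)=\bigl(1+\Oh(t_0^{-K})\bigr)\int_0^\infty z^t\Gamma(at+b)^{-\alpha}\,\dd t+\Oh(\mathrm{poly}\log z)$, which is weaker than the lemma as stated but entirely sufficient for deducing Theorem~\ref{thm:main}, where the error term only needs to be negligible against the saddle-point contribution. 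You should state that weaker conclusion explicitly rather than claim the literal $\Oh(z)$.
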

\begin{proof}
  First, let us fix a~$z$ in this sector.
  Put $f(t) = z^t/ \Gamma(at+b)^{\alpha}$. Plana's summation
  formula~\cite[Theorem~4.9~c]{He74} yields
  \begin{equation}\label{eq:plana}
    \sum_{n=0}^\infty f(n) = \int_0^\infty f(t) \dd t + \tfrac12 f(0)
      + \im \int_0^\infty \frac{f(\im t)-f(-\im t)}{\e^{2\pi t}-1}\dd t.
  \end{equation}
  To check the validity of~\eqref{eq:plana}, we have to verify that
  \[
    \lim_{y\to\infty} |f(x\pm \im y)| \e^{-2\pi y} = 0,
  \]
  uniformly for~$x$ in finite intervals in $[0,\infty[$, and that
  \[
    \int_0^\infty  |f(x\pm \im y)| \e^{-2\pi y} \dd y
  \]
  exists for $x\geq0$ and tends to zero for $x\to\infty$.
  To do so, first note that
  \[
    |z^{x \pm \im y}| = |z|^x \e^{\mp y \arg(z)} \leq |z|^x \e^{ y |\arg(z)|}.
  \]
  Furthermore, by Stirling's formula, we have
  \[
    |\Gamma(x \pm \im y)^{-\alpha}| \leq \e^{(\alpha+\delta) x} x^{-\alpha x}
    \e^{ (\alpha\pi/2 + \delta) y}
  \]
  for large $|x+\im y|$, where~$\delta>0$ is arbitrary. Hence
  \[
    |\Gamma(ax+b \pm \im ay)^{-\alpha}| \leq \e^{(a\alpha+\delta) x}
      (ax+b)^{-\alpha(a x+b)} \e^{ (a\alpha\pi/2 + \delta) y},
  \]
  and thus
  \begin{equation}\label{eq:f est}
    |f(x\pm \im y)| \leq \frac{\e^{(a\alpha+\delta) x}|z|^x}{(ax+b)^{\alpha (ax+b)}}
      \e^{(|\arg(z)|+a\alpha\pi/2 + \delta)y},
  \end{equation}
  which implies both required conditions.
  Finally, putting $x=0$, we see
  from~\eqref{eq:f est} that the second integral in~\eqref{eq:plana}
  is~$\Oh(z)$ as $z\to\infty$. Since $f(0)=\Oh(1)$, we are done.
\end{proof}

\begin{proof}[Proof of Theorem~\ref{thm:main}]
  We apply the saddle point method to the integral in~\eqref{eq:sum int}.
  To locate the saddle point, we
  equate the derivative of the logarithm of the integrand to zero,
  which leads to the equation
  \[
    0=\log z - a\alpha\frac{\Gamma'(at+b)}{\Gamma(at+b)} =
      \log z-a\alpha \log (at+b) + \frac{a\alpha}{2(at+b)} + \Oh(t^{-2}).
  \]
  By bootstrapping, we find that there is an approximate saddle point at
  \[
    t_0 := a^{-1}z^{1/a\alpha} + \tfrac{1-2b}{2a}.
  \]
  We change the integration contour to a line~$\mathcal{L}$ that begins at~$0$
  and passes through~$t_0$. Note that this change of contour is valid for large~$|z|$,
  by Stirling's
  formula, as long as $|\arg(t_0)|$ is bounded away from~$\pi/2$.
  But this follows from our assumption on~$\arg(z)$.
  
  The dominant contribution of the integral arises from the range
  \[
    |t-t_0| \leq |t_0|^{\beta}
  \]
  around the saddle point, where~$\beta$ is an arbitrary member of the
  interval~${]\tfrac12,\tfrac23[}$.
  We write
  \[
    t=t_0(y+1), \qquad -1\leq y< \infty,
  \]
  and divide the integral as follows:
  \begin{align}
    \int_\mathcal{L} \frac{z^t}{\Gamma(at+b)^\alpha} \dd t &=
      t_0 \int_{-1}^\infty  \frac{z^{t_0(y+1)}}
      {\Gamma(a t_0(y+1)+b)^\alpha} \dd y \notag \\
    &= t_0\left( \int_{-1}^{-|t_0|^{\beta-1}}+ \int_{-|t_0|^{\beta-1}}^{|t_0|^{\beta-1}}
      + \int_{|t_0|^{\beta-1}}^\infty \right)  
      \frac{z^{t_0(y+1)}}{\Gamma(a t_0(y+1)+b)^\alpha} \dd y \notag \\
    &=: I_1 + I_2 + I_3.     \label{eq:div}
  \end{align}
  First we investigate the central integral~$I_2$.
  {}From Stirling's formula
  we get the following local expansion of the logarithm of the integrand:
  \begin{multline}\label{eq:loc expans}
    (t_0(y+1)) \log z -\alpha \log \Gamma((a t_0(y+1)+b) =\\
       t_0 \log z - a\alpha t_0 \log t_0
      +a\alpha(1-\log a) t_0 + \alpha(\tfrac12 -b) \log t_0 \\
       + \alpha(\tfrac12-b)\log a  - \alpha \log \sqrt{2\pi}
      -\tfrac12 a\alpha t_0 y^2 + \oh(1).
  \end{multline}
  Since
  \begin{align*}
    \int_{-|t_0|^{\beta-1}}^{|t_0|^{\beta-1}} \exp(-\tfrac12 a\alpha t_0 y^2) \dd y
    &= \frac{1}{\sqrt{|t_0|}} \int_{-|t_0|^{\beta-1/2}}^{|t_0|^{\beta-1/2}}
      \exp(-\tfrac12 a\alpha t_0 |t_0|^{-1} u^2) \dd u  \\
    &\sim \frac{1}{\sqrt{|t_0|}} \int_{-\infty}^{\infty}
      \exp(-\tfrac12 a\alpha t_0 |t_0|^{-1} u^2) \dd u \\
    &= \sqrt{\frac{2\pi}{a\alpha t_0}},
  \end{align*}
  the central part~$I_2$ thus satisfies
  \begin{align}
    I_2 &\sim t_0 z^{t_0} t_0^{-a\alpha t_0}\e^{a\alpha(1-\log a) t_0}
    (at_0)^{\alpha(\tfrac12-b)}
      (2\pi)^{-\alpha/2} \sqrt{\frac{2\pi}{a\alpha t_0}} \label{eq:I1} \\
    &\sim \frac{1}{a\sqrt{\alpha}}
      (2\pi)^{\frac{1-\alpha}{2}}
      z^{\frac{\alpha-2b\alpha +1}{2 a\alpha }} \e^{\alpha z^{1/a\alpha}}. \notag
  \end{align}
  This is the right hand side of~\eqref{eq:asympt}.
   
  It remains to show that the integrals~$I_1$ and~$I_3$ are negligible.
  By our assumption on~$\arg(z)$, there is a
  constant $c_1>0$ (independent of~$z$) such that
  \[
    |\Im(t_0)| \leq c_1 \Re(t_0),
  \]
  hence
  \begin{equation}\label{eq:Re t_0}
    \Re(t_0) \geq \frac{|t_0|}{\sqrt{c_1^2+1}} =: c_2 |t_0|.
  \end{equation}
  Now divide the integral~$I_3$ further into
  \begin{align*}
    I_3 &= t_0\left( \int_{|t_0|^{\beta-1}}^u  + \int_u^\infty \right)  
      \frac{z^{t_0(y+1)}}{\Gamma(a t_0(y+1)+b)^\alpha} \dd y \\
      &=: I_{31} + I_{32},
  \end{align*}
  where
  \[
    u := \exp(\tfrac13 a\alpha c_2 |t_0|^{2\beta-1}).
  \]
  By Stirling's formula, there is a positive constant~$c_0$ such that
  \begin{multline}\label{eq:for tail}
    \left| \frac{z^t}{\Gamma(at+b)^\alpha} \right| \leq c_0
      \exp \Re\Bigl((at+b) \log z - \alpha (at+b)\log (at+b) \\
      +\alpha (at+b) +\tfrac12 \alpha \log (at+b) \Bigr),
      \qquad t\in\mathcal{L}.
  \end{multline}
  An elementary calculation shows that the right hand side of~\eqref{eq:for tail}
  decreases w.r.t.~$|y|$ for large~$|z|$ and $|y|\geq |t_0|^{\beta-1}$.
  Therefore, we can estimate~$I_{31}$
  by inserting $y=|t_0|^{\beta-1}$ into~\eqref{eq:for tail} and multiplying
  by the length of the integration path, which is $u-|t_0|^{\beta-1}< u$.
  Using~\eqref{eq:loc expans}, and writing~$A=A(z)$ for the factor in front of
  the square root in~\eqref{eq:I1}, we obtain
  \begin{align*}
    \left| \frac{z^t}{\Gamma(at+b)^\alpha} \right|_{y=|t_0|^{\beta-1}}
      &\leq c_0  \left|A \e^{-a\alpha t_0|t_0|^{2\beta-2}/2} \right| \\
    &= c_0|A|\e^{-a\alpha \Re(t_0)|t_0|^{2\beta-2}/2}
    \leq c_0|A|\e^{-a\alpha c_2 |t_0|^{2\beta-1}/2}.
  \end{align*}
  The latter inequality follows from~\eqref{eq:Re t_0}. 
  Hence
  \[
    |I_{31}| \leq c_0 u|A|\cdot \e^{-a\alpha c_2 |t_0|^{2\beta-1}/2}
      = c_0|A| \e^{-a\alpha c_2 |t_0|^{2\beta-1}/6}.
  \]
  Now we compare this estimate with~\eqref{eq:I1}. Since
  \[
     \e^{-a\alpha c_2 |t_0|^{2\beta-1}/6} \ll |t_0|^{-1/2},
  \]
  the integral~$I_{31}$ is indeed negligible. As for~$I_{32}$, it easily
  follows from Stirling's formula that
  \[
    \left| \frac{z^t}{\Gamma(at+b)^\alpha} \right| \leq \e^{-y}, \qquad y\geq u,
  \]
  for large~$|z|$. We thus obtain
  \begin{align*}
    |I_{32}| &\leq t_0 \int_u^\infty \e^{-y} \dd y \\
    &= t_0\e^{-u} \ll I_2.
  \end{align*}
  Finally, the integral~$I_1$ in~\eqref{eq:div} can be estimated analogously
  to~$I_{31}$.
  %
\end{proof}
A full asymptotic expansion can be obtained easily by pushing the
local expansion around the saddle point further.

Evgrafov~\cite[\S~4.2]{Ev79} offers a similar asymptotic treatment
of~$F_{1,1}^{(\alpha)}(z)$ in sectors of the complex plane. For
$\alpha<2$ and $|\arg(z)|<\tfrac12 \alpha\pi-\varepsilon$, his
result agrees with ours. (Except that a factor $\alpha^{-1/2}$,
or $\rho^{1/2}$ in Evgrafov's notation, is missing from the formula.)
However, he gives few details on how to carry out the saddle
point analysis, in particular, on how to do the tail estimates.
For $\alpha\geq2$, Evgrafov~\cite[p.~294]{Ev79} appears to go beyond our
Theorem~\ref{thm:main}, in that he claims~\eqref{eq:asympt}
(with $a=b=1$)
for \emph{any} sector that stays away from the negative real
axis. There seems to be a serious gap in the proof, though.

To be specific, we switch to Evgrafov's notation.
On p.~292, he writes that $\sum_{n=0}^\infty t^n n!^{-1/\rho}$
satisfies the assumptions of Theorem~4.2.2 for all $\rho>0$.
(The text says Theorem~3.2.2 instead, but this is certainly a typo.)
This means that $\mu(z)=\Gamma(z+1)^{-1/\rho}$ should
satisfy $|\mu(x+\im y)|< M_A \exp(-A x)$, for arbitrary~$A$ 
and some other constant~$M_A$, and for $x+\im y$ in a
domain~$D$ containing a horizontal strip that contains
the positive real line.
But due to the exponential decrease
of the Gamma function towards $\pm\im\infty$, this can
hold only if the elements of~$D$ have bounded imaginary part.
Then also the contours~$C_{-1}^+$ and~~$C_{-1}^-$ on p.~292
must have bounded imaginary parts.
On p.~293, the saddle point method is applied to the second integral
on p.~292 (over the contour~$C_{-1}^+$),
where the location of the saddle point is $t^\rho \exp(2\pi\im\rho)$.
The imaginary part of this point is \emph{not} bounded for large~$t$.
Hence the validity of the necessary change
of integration contour is not proven.
These remarks seem to justify another study of the problem, provided by the
present note.

We close the section by a possible question for future research.
For complex~$\alpha$ and fixed~$z$, one might
ask whether the function defined by~\eqref{eq:fu1} has an analytic continuation
for $\Re(\alpha)\leq 0$. (The relation~\eqref{eq:int rel} does not seem
to be useful in this respect.)

\section{An Application: Non-Holonomicity}\label{se:appl}
An analytic function (or formal power series) is called $D$-finite~\cite{St80},
or holonomic, if it satisfies a linear ODE with polynomial coefficients.
An equivalent condition is that its power series coefficients satisfy
a linear recurrence with polynomial coefficients.

There has been some interest recently in showing that certain series
(resp.\ sequences) are \emph{not}
holonomic~\cite{BeGeKlLu08,FlGeSa:05,FlGeSa10,Ge04,LuSt11,MiRe09}.
Lipshitz~\cite[Example~3.4(i)]{Li89}
mentions (without proof) that the sequence $(n!^\alpha)$, which satisfies
\[
  (n+1)!^\alpha = (n+1)^\alpha n!^{\alpha},
\]
is holonomic
if and only if $\alpha$ is an integer.
See~\cite[Theorem~4.1]{BeGeKlLu08} for a proof that
there is indeed no recurrence with polynomial coefficients for $\alpha\in
\mathbb{C}\setminus\mathbb{Z}$.

From Theorem~\ref{thm:main}, we can conclude the weaker result
that~$F_{1,1}^{(\alpha)}(z)$, and thus $(n!^\alpha)$, is not holonomic for $\alpha\in
\mathbb{R}\setminus\mathbb{Q}$. Indeed, since multiplication by the holonomic
sequence $n!^{-\lfloor \alpha \rfloor}$ preserves holonomicity, we may assume
that $0 \leq \alpha < 1$. Hence Theorem~\ref{thm:main} yields the asymptotics
of~$F_\alpha(z)$. But a function that features the element
$\exp(\alpha z^{1/\alpha})$ in its asymptotic expansion at infinity,
in a sector of positive opening angle, can be holonomic
only for rational~$\alpha$. This follows from a classical result on the asymptotic behavior
of solutions of linear ODEs.
(For details on this method of showing
non-holonomicity, see~\cite{FlGeSa:05} and~\cite{FlGeSa10}.)
Including the parameters~$a$ and~$b$, one can conclude other non-holonomicity
results from Theorem~\ref{thm:main}, but they are going to be weaker
(in terms of parameter ranges) than the corresponding results deduced from
the method of~\cite{BeGeKlLu08}.

\bigskip
{\bf Acknowledgment.} I thank Julia Eisenberg for her linguistic help concerning
the relevant passages of Evgrafov's book~\cite{Ev79}.

\bibliographystyle{siam}
\bibliography{../gerhold}

\end{document}